\theoremstyle{plain}
\newtheorem{thm}{Theorem}[section]
\newtheorem{lem}[thm]{Lemma}
\newtheorem{prop}[thm]{Proposition}
\newtheorem{cor}[thm]{Corollary}
\newtheorem{rem}[thm]{Remark}
\newtheorem{defn}[thm]{Definition}
\numberwithin{equation}{section} \errorcontextlines=0
\newcommand{\la}{\lambda}
\begin{document}
\title{An iterative formula for the Kostka-Foulkes polynomials}
\author{Timothee W. Bryan}
\address{Department of Mathematics, Colgate University,
Hamilton, NY 13346, USA}
\email{tbryan@colgate.edu}
\author{Naihuan Jing$^*$}
\address{Department of Mathematics, North Carolina State University, Raleigh, NC 27695, USA}
\email{jing@ncsu.edu}
%\thanks{{\scriptsize
%\hskip -0.4 true cm MSC (2010): Primary: 81P40; Secondary: 81Qxx.
%$*$Corresponding author, jing@ncsu.edu}}
\subjclass[2010]{Primary: 05E05; Secondary: 17B69, 05E10}\keywords{Kostka-Foulkes polynomials, Jing operators, Hall-Littlewood polynomials}
\thanks{$*$Corresponding author: jing@ncsu.edu}

\maketitle

\begin{abstract}
An iterative formula for the Kostka-Foulkes polynomials is given using the vertex operator realization of the Hall-Littlewood polynomials.
The operational formula can handle large Kostka-Foulkes polynomials, and a stability property for
the Kostka-Foulkes polynomials is shown. We also use our algorithm to give a formula of $K_{\lambda\mu}(t)$
for $\mu$ being hook-shaped.
\end{abstract}

\section{Introduction}

The Kostka-Foulkes polynomials $K_{\lambda\mu}(t)$ associated to a pair of partitions $\lambda, \mu$ have figured prominently in
representation theory and algebraic combinatorics (cf. \cite{DLT, Ki2}). They have arisen in the context of character theory of the general
linear group over the finite field \cite{G} in terms of the Hall-Littlewood polynomials \cite{Lit}.
Foulkes \cite{F} conjectured that $K_{\lambda\mu}(t)$ are
positive integral polynomials, which was proved by Lascoux-Sch\"utzenberger \cite{LS} using plactic monoids.
Lusztig later proved the Foulkes conjecture using geometric method \cite{L} by identifying
$q^{n(\mu)-n(\lambda)}K_{\lambda\mu}(q^{-1})$ as the affine Kazhdan-Lusztig polynomials of type $A$.
It was also proved using crystal graphs \cite{NY}. More generally it is a special case of the positivity
of the $(q, t)$-Kostka-Foulkes polynomials \cite{H}.
The Kostka-Foulkes
polynomials also calculate some fusion products for flag manifolds \cite{Ke}.
In combinatorics, $K_{\lambda\mu}(1)$ counts the number of tableaux of shape $\lambda$ with weight $\mu$, which is
the dimension of the irreducible representation of the symmetric group and also determines the characters
of the irreducible representations of the special linear Lie algebras.

Theoretically the Kostka-Foulkes polynomials can be expressed in several ways.
The first combinatorial formula is Lascoux-Sch\"utzen\-ber\-ger's charge statistic which shows that
the Kostka-Foulkes polynomials are positive integral polynomials. In terms of Kostant's partition function, Kostka-Foulkes polynomials are
given inductively by symmetrizing over the
symmetric group \cite[Ch. 3]{M}. Kirillov and Reshetikhin have derived a combinatorial formula
\cite{KR} via the Bethe Ansatz. Other combinatorial formulas \cite{GT, Ki1, HKO, DS} have been derived
since the introduction of the Macdonald symmetric functions. The Kostka-Foulkes polynomials can also be expressed in terms of crystal graphs \cite{NY}, which also shows the positivity.
The Haglund-Haiman-Loehr statistic \cite{HHL} for the general Macdonald polynomial obviously implies the positivity in one-parameter case.
Recently a positive combinatorial formula is known for the symplectic Kostka-Foulkes polynomial \cite{DGT}.

On the computational side, Morris has given an implicit iterative procedure to compute the Kostka-Foulkes polynomials \cite{Morris}. The
algorithm compiles
previous Hall-Littlewood polynomials to compare with relevant Schur functions to find the next one
(cf. \cite{DLT}), which is sometimes complicated to pin down the solution.
This poses a question on how to find a {\it direct and efficient} formula for the Kostka-Foulkes polynomials.

The aim of this paper is to give an operational and iterative formula for the Kostka-Foulkes polynomials, using
algebraic properties of the vertex operators of the Hall-Littlewood functions given by the second-named author \cite{Jing1}.
Our explicit formula facilitates the computation and may help reveal intrinsic property of $K_{\lambda\mu}(t)$, for instance, a stability property for the Kostka-Foulkes polynomial is derived (see Proposition \ref{p:stability}).

Although the formula was established
via the vertex operator techniques, the final computation can be formulated independently. One of the motivations is that our formula might shed some light on
an iterative formula for the two-parameter Kostka-Foulkes
polynomials.

In \cite{Z} Zabrocki has studied
certain hook-shaped operator to compute the Hall-Littlewood functions. It would be interesting to study an analogous
column operation similar to our formula for the Kostka-Foulkes polynomials.

The second-named author would like to thank Adriano Garsia for interesting discussions on
using vertex operators to compute the Kostka-Foulkes polynomials back in 1989.
Our current approach to the problem
is based upon dual vertex operators developed in \cite{Jing1, Jing2}.

\section{Vertex operator realization of Hall-Littlewood polynomials}

The original Hall-Littlewood vertex operators defined
in \cite{Jing1} used the infinite dimensional Heisenberg algebra and
its irreducible representation parameterized by $t$. These vertex operators were subsequently utilized in \cite{Jing2} to
study more realizations of symmetric functions.  To
be consistent with combinatorial convention, we reformulate the vertex operators
for the Hall-Littlewood symmetric functions as follows. For plethestic formulation, see \cite{SZ}.
Moreover, we will also recall a crucial vertex operator to realize the Schur functions in the same ring \cite{Jing1} as well as
its dual vertex operator \cite{Jing2}.

Let $V$ be the space of symmetric functions over $\mathbb Q(t)$.
There are several well-known bases such as ele\-mentary symmetric functions, power-sum symmetric functions, monomial symmetric functions,
and homogeneous symmetric functions.
The power-sum symmetric functions are perhaps the most useful ones on the space, as they are group-like elements
under the usual Hopf algebra structure.

A partition $\lambda$ of $n$, denoted $\la\vdash n$, is a non-increasing sequence of nonnegative integers: $\lambda_1\geqslant \lambda_2\geqslant \ldots \geqslant\lambda_l> 0$ such that $\sum_i\la_i=n$,
where $l=l(\lambda)$ is the length of the partition $\lambda$. The dominance order $\geqslant$ is defined by
$\lambda\geqslant\mu$ if $\lambda_1+\cdots+\lambda_i\geqslant\mu_1+\cdots+\mu_i$ for all $i$. For each partition $\lambda$, we define
\begin{align}
n(\lambda)=\sum_{i=1}^{l(\lambda)}(i-1)\la_i.
\end{align}

For partition $\la=(\la_1, \ldots, \la_l)$, we define
\begin{align}
\la^{[i]}&=(\la_{i+1}, \cdots, \la_l), \qquad i=0, 1, \ldots, l\\ \label{e:par}
\lambda^{(i)}&=(\lambda_{1}+1,\ldots ,\lambda_{i-1}+1,\lambda_{i+1},\ldots ,\lambda_{l}).
\end{align}
So $\la^{[0]}=\la$ and $\la^{[l]}=\emptyset$. It is readily seen that $n(\la^{[i]})-n(\la^{[i+1]})=|\la^{[i+1]}|$.

The partition $\la$ is usually visualized by its Young diagram of aligning $l$ rows of boxes to the left where the $i$th row consists
of $\la_i$ boxes. The dual partition $\la'=(\la_1', \ldots, \la_{\la_1}')$ is the partition associated with the reflection of
the Young diagram of $\la$ along the diagonal.

Recall that the power sum symmetric functions $p_{\lambda}$ form a $\mathbb Q$-basis of the ring of the symmetric functions, where $p_{\lambda}=p_{\lambda_1}p_{\lambda_2}\cdots$ with
\begin{equation}
p_n=x_1^n+x_2^n+\cdots.
\end{equation}
Therefore the ring of symmetric functions $V=%\Lambda_{\mathbb Q(t)}
\mathbb Q(t)[p_1, p_2, \ldots]$, the ring of polynomials in the $p_n$.
Using the degree gradation, $V$ becomes a graded ring
\begin{align}
V=\bigoplus_{n=0}^{\infty} V_n.
\end{align}
A linear operator $f$ is of degree $n$ if $f(V_m)\subset V_{m+n}$.

Let $\langle\ , \ \rangle$ be the Hall-Littlewood bilinear form on $V$ over $\mathbb Q(t)$ defined by
%on $\Lambda_{\mathbb Q(t)}$ defined by
\begin{align}
\langle p_{\lambda}, p_{\mu}\rangle=\delta_{\lambda\mu}\prod_{i\geqslant 1}\frac{i^{m_i(\lambda)}m_i(\lambda)!}{1-t^{\lambda_i}},
\end{align}
where $m_i(\lambda)$ is the multiplicity of $i$ in the partition $\lambda$. The multiplication operator
$p_n: V\longrightarrow V$ is of degree $n$. %as it sends a degree $m$ element to a degree $m+n$ element.
Clearly, the dual operator is the differential operator $p_n^* =\frac{n}{(1-t^n)}\frac{\partial}{\partial p_n}$ of degree $-n$. Note that * is $\mathbb Q(t)$-linear and an anti-involution.
\begin{defn}
The \textit{vertex operators} $S(z)$ and $H(z)$ and their dual operators $S^*(z)$ and $H^*(z)$ on the space $V$ are defined as the following
maps $V\longrightarrow V[[z, z^{-1}]]$ given by
\begin{align*}%\label{e:schurop}
S(z)&=\mbox{exp}\left( \sum\limits_{n\geqslant 1} \dfrac{1}{n}p_{n}z^{n} \right) \mbox{exp} \left( -\sum \limits_{n\geqslant 1} \frac{\partial}{\partial p_{n}}z^{-n} \right)=\sum_{n\in\mathbb Z}S_nz^{n},\\
S^*(z)&=\mbox{exp}\left(-\sum\limits_{n\geqslant 1} \dfrac{1-t^n}{n}p_{n}z^{n}\right) \mbox{exp} \left(\sum \limits_{n\geqslant 1} \frac{z^{-n}}{1-t^n}\frac{\partial}{\partial p_{n}}\right)=\sum_{n\in\mathbb Z}S^*_nz^{-n}.
\end{align*}
\begin{align*} %\label{e:hallop}
H(z)&=\mbox{exp} \left( \sum\limits_{n\geqslant 1} \dfrac{1-t^{n}}{n}p_nz^{n} \right) \mbox{exp} \left( -\sum \limits_{n\geqslant 1} \frac{\partial}{\partial p_n}z^{-n} \right)=\sum_{n\in\mathbb Z}H_nz^{n},\\
H^*(z)&=\mbox{exp} \left(-\sum\limits_{n\geqslant 1} \dfrac{1-t^{n}}{n}p_nz^{n} \right) \mbox{exp} \left(\sum \limits_{n\geqslant 1} \frac{\partial}{\partial p_n}z^{-n} \right)=\sum_{n\in\mathbb Z}H^*_nz^{-n}.
\end{align*}
\end{defn}
\begin{rem}
\textnormal{
The dual operators $S^*(z)$ and $H^*(z)$ \cite{Jing1, Jing2} will be the key for our later discussion.
The indexing is different from that of \cite{Jing1, Jing2}. Technically $S(z)^*=S^*(z^{-1})$ in the current notation.
}
\end{rem}

We collect some useful relations from \cite{Jing1} for explicit computation.

\begin{lem} We have the following relations:
\begin{align*}
H_{n}H_{n+1}&=tH_{n+1}H_{n}, \qquad H_{n}^{*}H_{n-1}^{*}=tH_{n-1}^{*}H_{n}^{*}, \quad n\in\mathbb Z;\\
H_{-n}. 1&=\delta_{n, 0}, \qquad\qquad H_{n}^{*}. 1=\delta_{n, 0}, \quad n\geqslant 0
\end{align*}
where $\delta_{n,m}$ is the Kronecker delta and $1$ is the vacuum vector in $V$.
\end{lem}
Observe that for $n\geqslant 0$, both $H_{-n}$ and $H_{n}^*$ are annihilation operators of degree $-n$.

We introduce the symmetric function $q_n=q_n(x; t)$ by the generating series
\begin{align}\label{e:genhomog}
\sum_{n\geqslant 0} q_nz^n=exp(\sum_{n=1}^{\infty}\frac{1-t^n}np_n z^n).
\end{align}
Clearly, as a polynomial in the $p_k$, the polynomial $q_n$ is the Hall-Littlewood symmetric function $Q_{(n)}$ associated to
the one-row partition $(n)$. For any partition $\lambda=(\lambda_1, \lambda_2, \cdots )$, we define
$$q_{\lambda}=q_{\lambda_1}q_{\lambda_2}\cdots$$
then the set $\{q_{\lambda}\}$ forms a basis of $V$,
which are usually called the generalized homogenous polynomials.

For convenience, we also denote $s_n=q_n(0)$, the homogeneous symmetric function or
the Schur function associated with the row partition $(n)$. The generating series of
the $s_n$ is
\begin{align}\label{e:homog}
s(z)=\sum_{n\geqslant 0} s_nz^n=exp(\sum_{n=1}^{\infty}\frac 1np_n z^n).
\end{align}

In this paper, we adopt the combinatorial $t$-integer $[n]$, $n\in\mathbb N$ defined by
\begin{align}
[n]=\frac{1-t^n}{1-t}=1+t+\cdots+t^{n-1}.
\end{align}

Therefore $[n]!=[n][n-1]\cdots [1]$. For simplicity we define $[0]=1$. We also use the Gauss $t$-binomial symbol
\begin{align}
\left[\begin{matrix}n\\ k\end{matrix}\right]=\frac{[n]!}{[k]![n-k]!}
\end{align}
which can also be defined inductively by:
\begin{align}\label{e:binom2}
\left[\begin{matrix}n+1\\ k\end{matrix}\right]=t^k\left[\begin{matrix}n\\ k\end{matrix}\right]+\left[\begin{matrix}n\\ k-1\end{matrix}\right]
\end{align}

\begin{prop} \cite{Jing1} \label{P:Jing1} \label{P:HL}
(1) Given a partition $\lambda=(\lambda_{1},\lambda_{2},\ldots ,\lambda_{l})$, the product
  $H_{\lambda_{1}}H_{\lambda_{2}}\cdots H_{\lambda_{l}}. 1$ can be expressed as
$$H_{\lambda_{1}}H_{\lambda_{2}}\cdots H_{\lambda_{l}}. 1
= \prod\limits_{i<j} \dfrac{1-R_{ij}}{1-tR_{ij}}q_{\lambda_{1}}q_{\lambda_{2}}\cdots q_{\lambda_{l}}
$$
where $R_{ij}$ is the raising operator given by $$R_{ij}q_{(\mu_{1},\mu_{2},\ldots ,\mu_{l})}=q_{(\mu_{1},\mu_{2},\ldots ,\mu_{i}+1,\ldots ,\mu_{j}-1,\ldots , \mu_{l})}$$
which ensures that the expression has finitely many terms. Moreover, the products
 $H_{\lambda}.1=H_{\lambda_1}H_{\lambda_2}\cdots H_{\lambda_l}.1$ are orthogonal such that
\begin{align}\label{e:orth}
\langle H_{\lambda}.1, H_{\mu}.1\rangle=\delta_{\lambda\mu}b_{\lambda}(t),
\end{align}
where $b_{\lambda}(t)=(1-t)^{l(\lambda)}\prod_{i\geqslant 1}[m_i(\lambda)]!$.

(2) Similarly for composition $\mu=(\mu_1, \cdots, \mu_k)$, the elements  $S_{\mu_{1}}S_{\mu_{2}}\cdots S_{\mu_{k}}. 1$ can be expressed as
$$S_{\mu_{1}}S_{\mu_{2}}\cdots S_{\mu_{k}}. 1 = \prod\limits_{i<j}(1-R_{ij})
s_{\mu_{1}}s_{\mu_{2}}\cdots s_{\mu_{k}}=s_{\mu},
$$
which is the Schur function associated to the composition $\mu$. In general, $s_{\mu}=0$ or $\pm s_{\lambda}$ for a partition $\lambda$ such that
$\lambda\in \mathfrak S_{l}(\mu+\delta)-\delta$. Here $\delta=(l-1, l-2, \cdots, 1, 0)$, where $l=l(\lambda)$.
\end{prop}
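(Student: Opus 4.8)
The plan is to derive both parts from the operator product expansion (OPE) of the vertex operators followed by extraction of a single coefficient, the orthogonality in part~(1) requiring in addition a commutation relation between $H_n$ and its adjoint. Throughout I factor $H(z)=H_+(z)H_-(z)$ with $H_+(z)=\exp\bigl(\sum_{n\geq1}\frac{1-t^n}{n}p_nz^n\bigr)$ a multiplication operator and $H_-(z)=\exp\bigl(-\sum_{n\geq1}\frac{\partial}{\partial p_n}z^{-n}\bigr)$ an annihilation operator, and likewise $B(z)=B_+(z)B_-(z)$ with $B_+(z)=\exp\bigl(\sum_{n\geq1}\frac1n p_nz^n\bigr)$. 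Since $[\frac{\partial}{\partial p_n},p_m]=\delta_{nm}$, the identity $e^Ae^B=e^Be^Ae^{[A,B]}$ (valid when $[A,B]$ is a scalar) together with $\sum_{n\geq1}\frac{1-t^n}{n}x^n=\log\frac{1-tx}{1-x}$ gives the basic contractions $H_-(z)H_+(w)=\frac{1-w/z}{1-tw/z}H_+(w)H_-(z)$ and $B_-(z)B_+(w)=(1-w/z)B_+(w)B_-(z)$, both expanded in the region $|z|>|w|$.

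For the first formula of part~(1) I would normal-order the product $H(z_1)\cdots H(z_l)$ by carrying each $H_-(z_i)$ to the right past each $H_+(z_j)$ with $i<j$, collecting one factor $\frac{1-z_j/z_i}{1-tz_j/z_i}$ per swap and working in the regime $|z_1|>\cdots>|z_l|$. Applying the outcome to $1$ and using $H_-(z).1=1$ leaves
\[
H(z_1)\cdots H(z_l).1=\prod_{i<j}\frac{1-z_j/z_i}{1-tz_j/z_i}\;H_+(z_1)\cdots H_+(z_l).1,
\]
and by \eqref{e:genhomog} the last factor equals $\sum_{\alpha}q_{\alpha_1}\cdots q_{\alpha_l}\,z_1^{\alpha_1}\cdots z_l^{\alpha_l}$ over all length-$l$ compositions $\alpha$ with nonnegative parts. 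Now $H_{\lambda_1}\cdots H_{\lambda_l}.1$ is the coefficient of $z_1^{\lambda_1}\cdots z_l^{\lambda_l}$, and the one thing to observe is that multiplying $\sum_\alpha q_\alpha z^\alpha$ by $z_j/z_i$ and reading off the coefficient of $z^\lambda$ reproduces $R_{ij}q_\lambda$; hence coefficient extraction turns the scalar $\prod_{i<j}\frac{1-z_j/z_i}{1-tz_j/z_i}$ into the operator $\prod_{i<j}\frac{1-R_{ij}}{1-tR_{ij}}$ applied to $q_{(\lambda_1,\ldots,\lambda_l)}$. Only finitely many terms survive, because the operator preserves total degree, $q_m=0$ for $m<0$, and once $\prod_{i<j}(1-R_{ij})\sum_{k\geq0}t^kR_{ij}^k$ is written out the requirement that every part of $\lambda+\sum_{i<j}b_{ij}(e_i-e_j)$ (the $e_i$ being standard basis vectors) stay nonnegative bounds each exponent $b_{ij}$.

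The substantive point is the orthogonality \eqref{e:orth}. I would first compute the adjoint of $H(z)$ for $\langle\ ,\ \rangle$: since the adjoint of multiplication by $p_n$ is a scalar multiple of $\partial/\partial p_n$, one gets $H_+(z)^*=\exp\bigl(\sum_{n\geq1}z^n\frac{\partial}{\partial p_n}\bigr)$ and $H_-(z)^*=\exp\bigl(-\sum_{n\geq1}\frac{1-t^n}{n}p_nz^{-n}\bigr)$, whence $H(z)^*=H^*(z^{-1})$, i.e. in modes $H_n^*$ is the adjoint of $H_n$. Therefore $\langle H_{\lambda}.1,H_{\mu}.1\rangle=\langle 1,\,H^*_{\lambda_l}\cdots H^*_{\lambda_1}H_{\mu_1}\cdots H_{\mu_k}.1\rangle$. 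The second OPE, $H^*(w)H(z)=\frac{1-tz/w}{1-z/w}\,\mathcal N(z,w)$ for $|w|>|z|$ and $H(z)H^*(w)=\frac{1-tw/z}{1-w/z}\,\mathcal N(z,w)$ for $|z|>|w|$ with a common normal-ordered series $\mathcal N(z,w)$, yields on comparing modes the commutation relations between the $H^*_m$ and the $H_n$ (the $\delta$-type contribution coming from the simple pole at $z=w$, where $\mathcal N(w,w)=1$); iterating these to push all the $H^*$'s to the far right, where $H^*_m.1=\delta_{m,0}$ by Lemma~\ref{Identities}, collapses the pairing to a sum over ``matchings''. Because $\lambda$ and $\mu$ are partitions, the deformed rules force every off-diagonal contribution to cancel, and on the diagonal the surviving terms must accumulate to $b_\lambda(t)=(1-t)^{l(\lambda)}\prod_{i\geq1}[m_i(\lambda)]!$. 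I expect this last step — pushing the OPE corrections through the induction on $l(\lambda)$ and checking that the diagonal value is exactly this product and not merely some polynomial in $t$ — to be the main obstacle.

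Part~(2) runs through the same machine with $B$ replacing $H$ and the simpler contraction $1-z_j/z_i$. Normal-ordering $B(z_1)\cdots B(z_k)$ and applying to $1$ gives $B(z_1)\cdots B(z_k).1=\prod_{i<j}(1-z_j/z_i)\,B_+(z_1)\cdots B_+(z_k).1$, whose last factor is $\sum_\alpha s_{\alpha_1}\cdots s_{\alpha_k}\,z_1^{\alpha_1}\cdots z_k^{\alpha_k}$ by \eqref{e:homog}; extracting the coefficient of $z_1^{\mu_1}\cdots z_k^{\mu_k}$ and converting $z_j/z_i$ into $R_{ij}$ exactly as above yields $B_{\mu_1}\cdots B_{\mu_k}.1=\prod_{i<j}(1-R_{ij})\,s_{(\mu_1,\ldots,\mu_k)}$. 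The standard identity $\prod_{1\leq i<j\leq k}(1-R_{ij})\,s_{\mu_1}\cdots s_{\mu_k}=\det\bigl(s_{\mu_i-i+j}\bigr)_{1\leq i,j\leq k}$ — proved by expanding the product as a Vandermonde-type alternating sum over $\mathfrak S_k$ in the shift operators — identifies this with the Jacobi--Trudi determinant, i.e. with the Schur function $s_\mu$ of the composition $\mu$. Finally, the alternating property of that determinant in the rows indexed by $\alpha:=\mu+\delta$, $\delta=(k-1,k-2,\ldots,1,0)$, gives $s_\mu=0$ when $\alpha$ has a repeated entry and $s_\mu=\operatorname{sgn}(w)\,s_\lambda$ when $w\in\mathfrak S_k$ is the unique permutation making $w\alpha$ strictly decreasing and $\lambda:=w\alpha-\delta$, which is the asserted straightening rule.
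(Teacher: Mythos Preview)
The paper does not supply its own proof of this proposition: it is quoted from \cite{Jing1} and the text explicitly says ``We refer the reader to \cite{Jing1} and \cite{Jing2} for explicit proofs of the following results.'' So there is no in-paper argument to compare against; the relevant comparison is with the original source.

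Your proposal follows precisely the method of \cite{Jing1,Jing2}: normal-order the product of vertex operators using the Heisenberg commutator identity $e^Ae^B=e^Be^Ae^{[A,B]}$, read off the scalar contractions $\frac{1-z_j/z_i}{1-tz_j/z_i}$ (respectively $1-z_j/z_i$), apply the result to the vacuum, and reinterpret multiplication by $z_j/z_i$ on the generating series of the $q_\alpha$ (respectively $s_\alpha$) as the raising operator $R_{ij}$. Your treatment of part~(2), including the Jacobi--Trudi identification and the straightening via the antisymmetry in $\mu+\delta$, is correct and is again the standard route. The adjoint computation $H(z)^*=H^*(z^{-1})$ is right (note incidentally that the paper's displayed formula $p_n^*=\frac{1-t^n}{n}\frac{\partial}{\partial p_n}$ is a misprint for $\frac{n}{1-t^n}\frac{\partial}{\partial p_n}$, and your computation implicitly uses the correct version).

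The one place where your outline is genuinely incomplete is the orthogonality \eqref{e:orth}. You correctly set up the contraction $H^*(w)H(z)$ versus $H(z)H^*(w)$ and note the delta-function contribution, but the passage ``the deformed rules force every off-diagonal contribution to cancel, and on the diagonal the surviving terms must accumulate to $b_\lambda(t)$'' is exactly where the work lies and you have not done it. In \cite{Jing1} this is handled by deriving the explicit mode relation (essentially $H_m^*H_n-tH_nH_m^*=tH_{m-1}^*H_{n-1}-H_{n-1}H_{m-1}^*+(1-t)\delta_{m,n}$, up to normalization) and then running an induction on $l(\lambda)$ in which the partition hypothesis $\lambda_1\geq\lambda_2\geq\cdots$ is used to kill the cross terms and the equal-parts case produces the factor $(1-t)[m_i]!$ for each block of repeated parts. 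As written, your sketch asserts this outcome rather than proving it; filling in that induction is the only substantive gap.
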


The vertex operator $H_{\lambda}.1$ is the Hall-Littlewood polynomial
$Q_{\lambda}(t)$ defined in \cite{M}. Similarly
$S_{\lambda}.1$ is the Schur polynomial $s_{\lambda}$, therefore we will sometimes write
$S_{\lambda}=S_{\lambda_1}\cdots S_{\lambda_l}.1$ (similarly for $H_{\lambda}=H_{\lambda}.1$) if there is no confusion.

\begin{defn}
The \textit{Kostka-Foulkes polynomials} $K_{\lambda \mu}(t)$ are defined for all partitions $\mu, \lambda$ by
$$s_{\lambda}=\sum\limits_{\mu} \dfrac{1}{b_{\mu}(t)}K_{\lambda \mu}(t)Q_{\mu}(t)$$
where $b_{\lambda}(t)=(1-t)^{l(\la)}\prod_{i\geqslant 1}[m_i(\la)]!$ was defined in \eqref{e:orth}.
\end{defn}
It is known that $K_{\la\mu}(t)=0$ unless $\la\geqslant \mu$. Using Proposition \ref{P:HL}, one immediately gets the following:
\begin{equation}\label{e:KFpoly}
K_{\lambda \mu}(t)= \langle S_{\lambda}.1,H_{\mu}.1 \rangle=\langle H^*_{\mu_1}S_{\lambda}.1,H_{\mu^{[1]}}.1 \rangle.
\end{equation}
Therefore if $H^*_{\mu_1}S_{\lambda}.1$ is straightened out as a linear combination of $S_{\tau}.1$ with $|\tau|=|\lambda|-\mu_1$,
then one can compute $K_{\lambda\mu}(t)$ iteratively.

\begin{prop}
\label{Up Down}
The commutation relations between the vertex operators realizing Hall-Littlewood and Schur symmetric functions are:
\begin{align}\label{e:com1}
S_ms_n&=s_nS_m-s_{n-1}S_{m+1},\\ \label{e:com2}
H_m^*S_n&=t^{-1}S_nH_m^*+t^{-1}H_{m-1}^*S_{n-1}+(t^{n-m}-t^{n-m-1})s_{n-m}.
\end{align}
\end{prop}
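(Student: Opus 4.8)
The plan is to derive both relations by the standard normal-ordering (``bosonic'') calculus on the Fock space $V=\mathbb Q(t)[p_1,p_2,\ldots]$, using only the Heisenberg relations $[\partial/\partial p_m,\,p_n]=\delta_{mn}$ and the fact that each of $B(z)$ and $H^*(z)$ is a product of a creation exponential in the $p_n$ times an annihilation exponential in the $\partial/\partial p_n$. It is convenient to view the Schur generating series $S(w)=\sum_{n\geqslant 0}s_nw^n=\exp\bigl(\sum_{n\geqslant 1}\tfrac1n p_nw^n\bigr)$ as the multiplication operator it defines on $V$; note this is exactly the creation part of $B(w)$. In each case the recipe is the same: multiply two of these operator-valued series, push every annihilation exponential to the right of every creation exponential (picking up a scalar ``contraction'' factor each time), and then extract the coefficient of the relevant monomial in $z$ and $w$ to obtain a relation among the components $B_m$, $H^*_m$ and the operators $s_n$.

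For \eqref{e:com1} this is immediate. The only non-commuting pair in $B(z)S(w)$ is the annihilation exponential of $B(z)$ against $S(w)$, and the corresponding contraction is $\exp\bigl(-\sum_n\tfrac1n(w/z)^n\bigr)=1-w/z$. Hence $B(z)S(w)=(1-w/z)\,S(w)B(z)$, an identity of operator-valued series with no ambiguity since $1-w/z$ is a Laurent polynomial. Comparing the coefficient of $z^mw^n$ on the two sides then yields \eqref{e:com1}.

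Relation \eqref{e:com2} carries the real content, the inhomogeneous term $(t^{n-m}-t^{n-m-1})s_{n-m}$ being exactly what makes it delicate. Here I would normal-order both orders of the product. In $H^*(z)B(w)$ the surviving contraction pairs the annihilation part of $H^*(z)$ with the creation part of $B(w)$ and equals $\exp\bigl(\sum_n\tfrac1n(w/z)^n\bigr)=(1-w/z)^{-1}$; in $B(w)H^*(z)$ it pairs the annihilation part of $B(w)$ with the creation part of $H^*(z)$ and equals $\exp\bigl(\sum_n\tfrac{1-t^n}{n}(z/w)^n\bigr)=\tfrac{1-tz/w}{1-z/w}$. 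Writing $N(z,w)$ for the common normal-ordered product (all creation exponentials times all annihilation exponentials, in either order), these read
\[
H^*(z)B(w)=\frac{z}{z-w}\,N(z,w),\qquad B(w)H^*(z)=\frac{tz-w}{z-w}\,N(z,w),
\]
where the first rational function is expanded for $|z|>|w|$ and the second for $|w|>|z|$. Multiplying each through by the polynomial $z-w$ removes this ambiguity and gives $(z-w)H^*(z)B(w)=zN(z,w)$ and $(z-w)B(w)H^*(z)=(tz-w)N(z,w)$, hence
\[
(z-w)\bigl(H^*(z)B(w)-t^{-1}B(w)H^*(z)\bigr)=t^{-1}w\,N(z,w).
\]

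The main obstacle --- and the only step demanding genuine care --- is recovering the operator identity from this after ``dividing by $z-w$'', since $H^*(z)B(w)$ and $B(w)H^*(z)$ live in different expansion domains. Their discrepancy is exactly the formal delta function $\delta(z,w)=\sum_{k\in\mathbb Z}w^kz^{-k-1}$ (the difference of the two expansions of $(z-w)^{-1}$), together with the collapsing rule $\delta(z,w)a(z)=\delta(z,w)a(w)$. Pushing the bookkeeping through, I expect to land on
\[
(1-t^{-1}wz^{-1})\,H^*(z)B(w)-t^{-1}B(w)H^*(z)=(1-t^{-1})\,w\,N(w,w)\,\delta(z,w),
\]
and a direct check from the definitions shows the annihilation parts of $N(z,w)$ cancel on the diagonal, so that by \eqref{e:homog}
\[
N(w,w)=\exp\Bigl(\sum_{n\geqslant 1}\tfrac{t^n}{n}p_nw^n\Bigr)=\sum_{k\geqslant 0}t^k s_k w^k.
\]
Extracting the coefficient of $z^{-m}w^n$: the left side gives $H^*_mB_n-t^{-1}H^*_{m-1}B_{n-1}-t^{-1}B_nH^*_m$, while on the right the factor $w^{k+1}\delta(z,w)$ forces $k=n-m$ and contributes $(1-t^{-1})t^{n-m}s_{n-m}=(t^{n-m}-t^{n-m-1})s_{n-m}$; rearranging gives \eqref{e:com2}. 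Everything else is mechanical normal-ordering and coefficient comparison.
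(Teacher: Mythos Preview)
Your proposal is correct and follows essentially the same route as the paper's proof: derive the generating-series identities $B(z)S(w)=(1-w/z)S(w)B(z)$ and the delta-function relation between $H^*(z)B(w)$ and $B(w)H^*(z)$ via normal ordering, then extract coefficients. The paper merely states these identities and cites \cite{Jing1,Jing2} for the technique, whereas you actually carry out the contraction calculations and the delta-function bookkeeping; in particular your verification that $N(w,w)=S(tw)$ and your coefficient extraction are exactly what is needed to complete the argument the paper only sketches.
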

\begin{proof} The first relation follows from the relation
$$S(z)s(w)=(1-\dfrac{w}{z})s(w)S(z)$$
where $s(z)$ is the generating series of the Schur function \eqref{e:homog}.

The second relation follows from the commutation relation
\begin{align}
H^*(z)S(w)\frac{w-tz}z+S(w)H^*(z)=(1-t)s(tz)\delta(\frac wz)
\end{align}
which can be checked by the same method in \cite{Jing1} and
$\delta(z)=\sum_{n\in\mathbb Z}z^n$ is the delta function.
\end{proof}

%\begin{exmp}
%\label{K(321) Up Down}
%Let $T$ be the Young tableau of shape $\lambda=(3,2,1)$ with content $(2^2,1^2)$. By applying the downward commutation relation for the dual Hall-Littlewood inner product, the Kostka-Foulkes polynomial for $T$ is
%\begin{align*}
%&K_{(3,2,1),(2^{2},1^{2})}\\
%&=K_{(2^{2},1),(2,1^{3})} + t^{-1}\langle S_{3}H_{2}^{*}S_{2}S_{1},H_{2}H_{1}^{2}\rangle\\
%&\qquad\qquad- (1-t)[K_{(3,1),(2,1^{2})}+K_{(2^{2}),(2,1^{2})}+K_{(2,1^{2}),(2,1^{2})}]\\
%&=K_{(2^{2},1),(2,1^{3})} + t^{-2}\langle S_{3}H_{1}^{*}S_{1}^{2},H_{2}H_{1}^{2}\rangle + t^{-2}\langle S_{3}S_{2}H_{2}^{*}S_{1},H_{2}H_{1}^{2}\rangle\\
%&\qquad\qquad- (1-t)[K_{(3,1),(2,1^{2})}+K_{(2^{2}),(2,1^{2})}+K_{(2,1^{2}),(2,1^{2})}]\\
%&\qquad\qquad- (1-t)t^{-2}K_{(3,1),(2,1^{2})}\\
%&=K_{(2^{2},1),(2,1^{3})} - (1-t)K_{(2,1^{2}),(2,1^{2})} - (1-t)K_{(2^{2}),(2,1^{2})}\\
%&\qquad\qquad- (1-t)K_{(3,1),(2,1^{2})} - (1-t)t^{-2}K_{(3,1),(2,1^{2})} - t^{-2}K_{(3,1),(2,1^{2})}\\
%%&=K_{(2^{2},1),(2,1^{3})} - (1-t)K_{(2,1^{2}),(2,1^{2})} - (1-t)K_{(2^{2}),(2,1^{2})}\\
%%&\qquad\qquad- (-t^{-1}+1-t)K_{(3,1),(2,1^{2})}\\
%&=(t+t^{2})-(1-t)(1)-(1-t)(t)-(-t^{-1}+1-t)(t+t^{2})\\
%&=t+2t^{2}+t^{3}.
%\end{align*}
%\end{exmp}

We remark that in applying Proposition \ref{Up Down} one often utilizes the simple fact that
$$ H^*_nS_{\lambda_{1}}S_{\lambda_{2}}\cdots S_{\lambda_{l}}.1=0$$
whenever $n>|\lambda|$, which helps terminate the iteration.

We are now ready to give our main result, an explicit formula for the Kostka-Foulkes polynomials utilizing the vertex operator for the Hall-Little\-wood polynomials. In the theorem and subsequent proof, we will make repeated use of partitions $\lambda^{(i)}$ defined
in \eqref{e:par}.

\begin{thm}\label{t:iterative}
\label{Kostka Formula}
For partition $\lambda\vdash n$ with $\lambda=(\lambda_{1},\lambda_{2},\ldots ,\lambda_{l})$ and natural number $k$,
\begin{align}\label{e:iterative}
H_{k}^{*}S_{\lambda}=\sum\limits_{i=1}^l (-1)^{i-1}t^{\lambda_{i}-k-i+1}s_{\lambda_{i}-k-i+1}S_{\lambda^{(i)}},
\end{align}
where $s_{n}$ is the multiplication operator by the Schur polynomial $s_{n}$.
\end{thm}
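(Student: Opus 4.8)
The plan is to prove \eqref{e:iterative} by induction on $n=|\lambda|=\sum_i\lambda_i$ (equivalently, on the pair $(l(\lambda),\lambda_1)$ ordered lexicographically), using the second commutation relation of Proposition~\ref{Up Down} as the engine. Relation \eqref{e:com2} drags the dual operator $H_k^*$ past the leading factor of $B_\lambda=B_{\lambda_1}B_{\lambda_2}\cdots B_{\lambda_l}.1$ and produces three pieces:
\begin{align*}
H_k^*B_\lambda
&= t^{-1}B_{\lambda_1}\bigl(H_k^*B_{\lambda_2}\cdots B_{\lambda_l}.1\bigr)
+ t^{-1}H_{k-1}^*B_{\lambda_1-1}B_{\lambda_2}\cdots B_{\lambda_l}.1\\
&\quad+(t^{\lambda_1-k}-t^{\lambda_1-k-1})s_{\lambda_1-k}B_{\lambda_2}\cdots B_{\lambda_l}.1 .
\end{align*}
The first piece involves the strictly shorter partition $(\lambda_2,\dots,\lambda_l)$ of size $n-\lambda_1$; the second involves $(\lambda_1-1,\lambda_2,\dots,\lambda_l)$, which has size $n-1$ and is either again a partition (when $\lambda_1>\lambda_2$) or else $\lambda_1=\lambda_2$, in which case the composition $(\lambda_1-1,\lambda_2,\dots,\lambda_l)$ has two equal shifted parts and hence $B_{\lambda_1-1}B_{\lambda_2}\cdots B_{\lambda_l}.1=0$ by Proposition~\ref{P:HL}(2); the third piece is a finished Schur product, expanded by the Pieri rule. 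So both recursive pieces have strictly smaller size and the induction is well founded.

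For the base case $\lambda=(m)$ one checks $H_k^*B_m.1=t^{m-k}s_{m-k}$ for all $k\geq 0$. When $k>m$ both sides vanish (by the remark that $H_n^*B_\lambda=0$ for $n>|\lambda|$); the value $H_m^*B_m.1=1$ follows from a short descending recursion on $m$ using Lemma~\ref{Identities}; and the remaining cases follow from the generating-series identity $H^*(z)B(w)\tfrac{w-tz}{z}+B(w)H^*(z)=S(tz)\delta(z-w)$ underlying \eqref{e:com2}, i.e.\ by a direct coefficient computation in $H^*(z)B(w).1$.

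In the inductive step one substitutes the inductive hypothesis into the first and second pieces. In the first piece this gives $t^{-1}B_{\lambda_1}\sum_{j=1}^{l-1}(-1)^{j-1}t^{\lambda_{j+1}-k-j+1}s_{\lambda_{j+1}-k-j+1}B_{(\lambda_2,\dots,\lambda_l)^{(j)}}$; now pull each multiplication operator $s_{\lambda_{j+1}-k-j+1}$ leftward past $B_{\lambda_1}$ by the first commutation relation \eqref{e:com1}, which splits each summand into a $B_{\lambda_1}$-part and a $B_{\lambda_1+1}$-part. Since $B_{\lambda_1+1}B_{(\lambda_2,\dots,\lambda_l)^{(j)}}=B_{\lambda^{(j+1)}}$, the $B_{\lambda_1+1}$-parts assemble, after matching $t$-powers and signs, into precisely the $i=2,\dots,l$ summands of the right-hand side of \eqref{e:iterative}; the $B_{\lambda_1}$-parts cancel termwise against the $i\geq 2$ contributions of the second piece (here one uses $\mu^{(i)}=(\lambda_1,\lambda_2+1,\dots)$ for $\mu=(\lambda_1-1,\lambda_2,\dots,\lambda_l)$). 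Finally the $i=1$ contribution of the second piece cancels the unwanted $-t^{\lambda_1-k-1}s_{\lambda_1-k}B_{\lambda^{(1)}}$ coming out of the third piece, leaving the $i=1$ summand $t^{\lambda_1-k}s_{\lambda_1-k}B_{\lambda^{(1)}}$. Summing the surviving terms gives exactly \eqref{e:iterative}.

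The genuinely routine ingredients are writing down \eqref{e:com1} and \eqref{e:com2}; the real work—and the step I expect to be the main obstacle—is the index and sign bookkeeping in the inductive step: keeping the shifted shapes $\lambda^{(i)}$, the exponents $\lambda_i-k-i+1$, and the alternating signs aligned so that the three groups of terms telescope and cancel as claimed. A secondary point requiring care is that the middle piece may fail to be indexed by a partition, which is exactly why the vanishing in Proposition~\ref{P:HL}(2) is invoked, and that small values of $k$ (in particular the appearance of $H_{k-1}^*$ when $k=0$) must be absorbed into the one-row base computation rather than the recursion.
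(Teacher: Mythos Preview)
Your proposal is correct and follows essentially the same route as the paper's proof: both argue by induction, apply the commutation relation \eqref{e:com2} to peel off $B_{\lambda_1}$ into three pieces, invoke the inductive hypothesis on the two recursive pieces, push $B_{\lambda_1}$ past the resulting $s$-multipliers via \eqref{e:com1}, and then verify that the terms telescope to \eqref{e:iterative}. The only cosmetic differences are that the paper inducts on $|\lambda|+k$ rather than $|\lambda|$ alone, and that you are more explicit about the degenerate cases $\lambda_1=\lambda_2$ and $k=0$, which the paper leaves implicit.
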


\begin{proof} We argue by induction on $|\lambda |+k$, where $\lambda$ is a partition and $k$ is the degree of
the dual vertex operator for the Hall-Littlewood symmetric function.
First of all, the initial step is clear.
Thus the inductive hypothesis implies that
\begin{align*}
&H_{k-1}^{*}S_{\lambda_{1}-1}S_{\lambda_{2}}\cdots S_{\lambda_{l}}\\
&=t^{\lambda_{1}-k}s_{\lambda_{1}-k}S_{\lambda_{2}}\cdots S_{\lambda_{l}}-t^{\lambda_{2}-k}s_{\lambda_{2}-k}S_{\lambda_{1}}S_{\lambda_{3}}\cdots S_{\lambda_{l}}-\cdots \\
&H_{k}^{*}S_{\lambda_{2}}\cdots S_{\lambda_{l}}\\
&=t^{\lambda_{2}-k}s_{\lambda_{2}-k}S_{\lambda_{3}}\cdots  S_{\lambda_{l}}-t^{\lambda_{3}-k-1}s_{\lambda_{3}-k-1}S_{\lambda_{2}+1}S_{\lambda_{4}}\cdots  S_{\lambda_{l}}-\cdots.
\end{align*}

Now we have
\begin{align*}
H_{k}^{*}S_{\lambda}
&=t^{-1}S_{\lambda_{1}}H_{k}^{*}S_{\lambda_{2}}\cdots S_{\lambda_{l}}+t^{-1}H_{k-1}^{*}S_{\lambda_{1}-1}S_{\lambda_{2}}\cdots S_{\lambda_{l}}\\
&\quad+(t^{\lambda_{1}-k}-t^{\lambda_{1}-k-1})s_{\lambda_{1}-k}S_{\lambda_{2}}\cdots S_{\lambda_{l}}\\
&= t^{-1}S_{\lambda_{1}}(t^{\lambda_{2}-k}s_{\lambda_{2}-k}S_{\lambda_{3}}\cdots S_{\lambda_{l}}-t^{\lambda_{3}-k-1}s_{\lambda_{3}-k-1}S_{\lambda_{2}+1}S_{\lambda_{4}}\cdots S_{\lambda_{l}}\\
&\quad+t^{\lambda_{4}-k-2}s_{\lambda_{4}-k-2}S_{\lambda_{2}+1}S_{\lambda_{3}+1}S_{\lambda_{5}}\cdots S_{\lambda_{l}}-\cdots)\\
&\quad+t^{-1}(t^{\lambda_{1}-k}s_{\lambda_{1}-k}S_{\lambda_{2}}\cdots S_{\lambda_{l}}-t^{\lambda_{2}-k}s_{\lambda_{2}-k}S_{\lambda_{1}}S_{\lambda_{3}}\cdots S_{\lambda_{l}}\\
&\quad+t^{\lambda_{3}-k-1}s_{\lambda_{3}-k-1}S_{\lambda_{1}}S_{\lambda_{2}+1}S_{\lambda_{4}}\cdots S_{\lambda_{l}}-\cdots \\
&\quad+(t^{\lambda_{1}-k}-t^{\lambda_{1}-k-1})s_{\lambda_{1}-k}S_{\lambda_{2}}\cdots S_{\lambda_{l}}
\end{align*}
Using the commutation relation \eqref{e:com1} in Proposition \ref{Up Down}, the first parenthesis can be put into
\begin{align*}
&\ (t^{\lambda_{2}-k-1}s_{\lambda_{2}-k}S_{\lambda_{1}}S_{\lambda_{3}}\cdots S_{\lambda_{l}}-t^{\lambda_{3}-k-2}s_{\lambda_{3}-k-1}S_{\lambda_{1}}S_{\lambda_{2}+1}S_{\lambda_{4}}\cdots S_{\lambda_{l}}\\
&+t^{\lambda_{4}-k-3}s_{\lambda_{4}-k-2}S_{\lambda_{1}}S_{\lambda_{2}+1}S_{\lambda_{3}+1}S_{\lambda_{5}}\cdots S_{\lambda_{l}}-\cdots )\\
&-(t^{\lambda_{2}-k-1}s_{\lambda_{2}-k-1}S_{\lambda_{1}+1}S_{\lambda_{3}}\cdots S_{\lambda_{l}}-t^{\lambda_{3}-k-2}s_{\lambda_{3}-k-2}S_{\lambda_{1}+1}S_{\lambda_{2}+1}S_{\lambda_{4}}\cdots S_{\lambda_{l}}\\
&+t^{\lambda_{4}-k-3}s_{\lambda_{4}-k-3}S_{\lambda_{1}+1}S_{\lambda_{2}+1}S_{\lambda_{3}+1}S_{\lambda_{5}}\cdots S_{\lambda_{l}}-\cdots )
\end{align*}
Combining with the other terms, they are exactly the following sum
\begin{align*}
\sum\limits_{i=1}^{l} (-1)^{i-1}t^{\lambda_{i}-k-i+1}s_{\lambda_{i}-k-i+1}S_{\lambda^{(i)}}.
\end{align*}
\end{proof}

Using the Pieri rule, we get the following direct and
iterative formula for the Kostka-Foulkes polynomials.
\begin{cor}\label{c:alg}
For partition $\lambda, \mu\vdash n$ with $\lambda\geqslant\mu$, one has that
\begin{align}\label{e:iterative2}
K_{\la\mu}(t)=\sum\limits_{i=1}^{l(\lambda)} (-1)^{i-1}t^{\lambda_{i}-\mu_1-i+1}\sum_{\tau^i}K_{\tau^i\mu^{[1]}}(t),
\end{align}
where $\mu^{[1]}=(\mu_2, \mu_3, \ldots)$ and $\tau^i$ runs through the partitions such that $\tau^i/\la^{(i)}$ are horizontal $(\la_i-\mu_1-i+1)$-strips.
\end{cor}

\begin{rem}
\textnormal{
Morris \cite{Morris} has given an implicit iterative algorithm to compute $K_{\la\mu}(t)$ by
listing all Schur functions corresponding to previous partitions in dominance order and then
extract the relevant ones (see \cite[Sect. 4]{DLT} for a clear exposition),
while ours is a direct iterative one based on the upper
partition $\lambda$.
}
\end{rem}

\ In the following remark we highlight several interesting features of the Kostka-Foulkes polynomial formula in Theorem \ref{Kostka Formula}.

\begin{rem} \textnormal{For partitions $\lambda,\mu\vdash n$ with $\lambda=(\lambda_{1},\lambda_{2},\ldots ,\lambda_{l})$,}
\hspace{1.15in} \begin{itemize}
\item \textnormal{ The formula has at most $l$ nonzero summands, although in many cases there are fewer by
$s_{-m}=0$ for a positive integer $m$.}
\item \textnormal{ The formula depends only upon $\mu_{1}$ from the content partition.}
\item \textnormal{ The exponent $\lambda_{i}-k-i+1$ of $t$ is evocative of the exponents of the determinant definition of the Schur symmetric functions.}
\item \textnormal{ Despite the alternating signs in the formula, positivity is assured by \cite{LS}. It would be interesting how to
show the positivity directly from Corollary \ref{c:alg}.}
\end{itemize}
\end{rem}

%We now use the formula to compute some Kostka-Foulkes polynomials.

%The following fact is clear from tableaux consideration, and also immediate from Theorem \ref{t:iterative}.
%
%\begin{fact} Let $\la, \mu$ be two partitions such that $\la$ and $\mu$ share the first $r$ common parts $\la_1=\mu_1, \ldots, \la_r=\mu_r$,
%and $\tilde{\la}=(\la_{r+1}, \ldots, ), \tilde{\mu}=(\mu_{r+1}, \ldots, )$. Then
%$$
%K_{\la\mu}(t)=K_{\tilde{\la}\tilde{\mu}}(t).
%$$
%\end{fact}

The Kostka number $K_{\lambda\mu}$ satisfies the stability property \cite[Ex.1.6.4]{M}, which
implies that if $\mu_1\geqslant\lambda_2$ then $K_{\lambda+(r),\mu+(r)}=K_{\lambda, \mu}$ for all $r\geqslant 1$.
Here $\lambda+(r)=(\lambda_1+r, \lambda_2, \cdots)$. We have the following $t$-analog:

\begin{prop} \label{p:stability} Suppose $\mu_1\geqslant\lambda_2$, then for all $r\geqslant 1$ we have the stability
$K_{\lambda+(r),\mu+(r)}(t)=K_{\lambda, \mu}(t)$.
\end{prop}
\begin{proof} This follows immediately from Theorem \ref{t:iterative}:
\begin{align*}
K_{\lambda+(r),\mu+(r)}(t)=t^{\lambda_1-\mu_1}\langle s_{\lambda_i-\mu_1}S_{\lambda^{(1)}}, H_{\mu^{[1]}}\rangle=
&K_{\lambda,\mu}(t)
\end{align*}
\end{proof}

The following simple facts are
easy consequences of Theorem \ref{t:iterative}. The first two are well-known (see \cite{M}).

\begin{prop} For $\mu\vdash n$, we have that
\begin{align}\label{e:1row}
&K_{\mu, (n)}=\delta_{\mu, (n)}, \\
&K_{(n), \mu}=t^{n(\mu)},\\
&K_{(r, s), {\mu}}=t^{r-\mu_1}(K_{(r-\mu_1+s), \tilde{\mu}}+K_{(r-\mu_1+s-1, 1), \tilde{\mu}}+\cdots+K_{(r-\mu_1, s), \tilde{\mu}})\\ \notag
&-t^{s-\mu_1-1}(K_{(r-\mu_1+s), \tilde{\mu}}+K_{(r-\mu_1+s-1, 1), \tilde{\mu}}+\cdots+K_{(r+1, s-\mu_1-1), \tilde{\mu}})
\end{align}
where $\tilde{\mu}=\mu^{[1]}$.
\end{prop}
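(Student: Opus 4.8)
The plan is to derive all three identities from the single operator formula \eqref{e:iterative} of Theorem~\ref{t:iterative}, combined with the realization $K_{\la\mu}(t)=\langle B_{\la}.1,H_{\mu}.1\rangle$ and the adjointness of $H^{*}(z)$ with respect to $\langle\ ,\ \rangle$, which strips off the largest part of $\mu$:
\begin{align*}
K_{\la\mu}(t)=\big\langle B_{\la}.1,\ H_{\mu_{1}}\big(H_{\mu^{[1]}}.1\big)\big\rangle=\big\langle H_{\mu_{1}}^{*}B_{\la}.1,\ H_{\mu^{[1]}}.1\big\rangle .
\end{align*}
In each case one applies \eqref{e:iterative} to $H_{\mu_{1}}^{*}B_{\la}$, evaluates at $1$, re-expands the resulting products of Schur functions by the Pieri rule, and reads off the matrix coefficients against $H_{\mu^{[1]}}.1$ as Kostka-Foulkes polynomials. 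For $K_{\la,(n)}$ one takes $\mu=(n)$, so $K_{\la,(n)}=\langle H_{n}^{*}B_{\la}.1,\,1\rangle$; by \eqref{e:iterative} with $k=n$ this is $\sum_{i=1}^{l}(-1)^{i-1}t^{\la_{i}-n-i+1}\langle s_{(\la_{i}-n-i+1)}s_{\la^{(i)}},1\rangle$, and since $|\la^{(i)}|=n-\la_{i}+i-1$ every nonzero summand is homogeneous of degree $0$, hence pairs nontrivially with $1$ only when it equals the constant $1$. This forces $\la^{(i)}=\emptyset$ and $\la_{i}=n+i-1$, i.e. $l=1$, $i=1$, $\la=(n)$, with value $1$; so $K_{\la,(n)}=\delta_{\la,(n)}$. (This also follows at once from $K_{\la\mu}=0$ unless $\la\geqslant\mu$ together with $K_{(n),(n)}=1$, since $(n)$ is maximal in the dominance order.)

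For $K_{(n),\mu}$ I would argue by induction on $\ell(\mu)$. The base case $\mu=(n)$ is $K_{(n),(n)}=1=t^{n((n))}$. For the inductive step, strip off $H_{\mu_{1}}^{*}$ and apply \eqref{e:iterative} to $B_{(n)}$; since $(n)^{(1)}=\emptyset$ only the $i=1$ term survives, so $H_{\mu_{1}}^{*}B_{(n)}=t^{\,n-\mu_{1}}s_{(n-\mu_{1})}$ and hence
\begin{align*}
K_{(n),\mu}=t^{\,n-\mu_{1}}\big\langle s_{(n-\mu_{1})},H_{\mu^{[1]}}.1\big\rangle=t^{\,n-\mu_{1}}K_{(n-\mu_{1}),\,\mu^{[1]}}(t).
\end{align*}
By the inductive hypothesis $K_{(n-\mu_{1}),\mu^{[1]}}(t)=t^{\,n(\mu^{[1]})}$, and the identity $n(\mu)-n(\mu^{[1]})=|\mu^{[1]}|=n-\mu_{1}$ (the lemma proved just above) gives $t^{\,n-\mu_{1}}t^{\,n(\mu^{[1]})}=t^{\,n(\mu)}$, completing the induction.

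For $K_{(r,s),\mu}$ (so $k=\mu_{1}$ and $\tilde\mu=\mu^{[1]}$) the shape $(r,s)$ has length $2$, with $(r,s)^{(1)}=(s)$ and $(r,s)^{(2)}=(r+1)$, so \eqref{e:iterative} yields
\begin{align*}
H_{\mu_{1}}^{*}B_{(r,s)}=t^{\,r-\mu_{1}}s_{(r-\mu_{1})}B_{(s)}-t^{\,s-\mu_{1}-1}s_{(s-\mu_{1}-1)}B_{(r+1)} .
\end{align*}
Evaluating at $1$ turns the right-hand side into $t^{\,r-\mu_{1}}\,s_{(r-\mu_{1})}s_{(s)}-t^{\,s-\mu_{1}-1}\,s_{(s-\mu_{1}-1)}s_{(r+1)}$; expanding each product of one-row Schur functions by the Pieri rule into a sum of two-row Schur functions $s_{\nu}$, and pairing each $s_{\nu}$ with $H_{\mu^{[1]}}.1$, converts the two terms into two blocks of Kostka-Foulkes polynomials $K_{\nu,\tilde\mu}(t)$, which is the asserted formula. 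The only real work here, and the main obstacle, is the combinatorial bookkeeping: one must control the summation ranges of the two Pieri expansions (using $s_{(m)}=0$ for $m<0$, the vanishing of the second block when $s\leqslant\mu_{1}$, and the truncation $s_{(r-\mu_{1})}s_{(s)}=\sum_{j=0}^{\min(r-\mu_{1},s)}s_{(r-\mu_{1}+s-j,\,j)}$), note that the two expansions run along the same anti-diagonal of partitions so the subtracted block partially cancels against the first, and collect the surviving coefficients into the stated two-block form. No input beyond Theorem~\ref{t:iterative} and the Pieri rule is required.
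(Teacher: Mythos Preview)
Your approach is exactly that of the paper: the paper's entire proof reads ``The first and third identities are immediate consequence of Theorem~\ref{t:iterative}. The second one can be proved easily by induction using Theorem~\ref{t:iterative},'' and you have simply spelled out those three applications (including the same induction on $\ell(\mu)$ using the lemma $n(\mu)-n(\mu^{[1]})=|\mu^{[1]}|$). One minor remark: your careful computation for the two-row case gives the $i=2$ contribution $-t^{\,s-k-1}s_{(s-k-1)}s_{(r+1)}$, so the second block should carry the prefactor $t^{\,s-k-1}$ and run over partitions $(r+s-k),(r+s-k-1,1),\ldots,(r+1,s-k-1)$; the exponent $s-k-2$ and the listed endpoints in the stated Proposition appear to be typographical slips, not a defect in your argument.
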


Morris has given a table of $K_{\lambda\mu}$ for $|\lambda|\leqslant 4$
and derived a formula for $K_{(21^{n-2}), \mu}$ in $\mathbb Z[t]$ \cite{Morris}. A general compact formula
is known for $\lambda$ being hook-shaped \cite[Lemma 7.12]{Ki2}:

%\begin{exmp} We use the algorithm to recover the formula:
\begin{align}
K_{(n-k, 1^k), \mu}&=t^{n(\mu)+\frac{k(k+1-2l)}2}\left[\begin{matrix}l-1\\ k\end{matrix}\right]
\end{align}
where $n=|\mu|$, $l=l(\mu)$ and $(n-k, 1^k)\geqslant\mu$. The formula can also be easily proved by induction on $|\mu|$ using Corollary \ref{c:alg}.
%It can also be checked by \eqref{e:1row}.

%\begin{align*}
%K_{(n-k, 1^k), \mu}
%&=t^{n-\mu_1-k}(K_{(n-\mu_1-k+1, 1^{k-1}), \mu^{[1]}}+K_{(n-\mu_1-k, 1^k), \mu^{[1]}})\\
%&=t^{n(\mu)-k+\frac{(k-1)(k-2l+2)}2-l+1}\left(t^{l-1}\left[\begin{matrix}l-2\\ k-1\end{matrix}\right]+
%t^{k}\left[\begin{matrix}l-2\\ k\end{matrix}\right]\right)\\
%&=t^{n(\mu)+\frac{k(k-2l+1)}2}\left[\begin{matrix}l-1\\ k\end{matrix}\right],
%\end{align*}
%where we have used the identities:
%\begin{align*}
%t^{l-1}\left[\begin{matrix}l-2\\ k-1\end{matrix}\right]+
%t^{k}\left[\begin{matrix}l-2\\ k\end{matrix}\right]&=(t^{l-1}-1)\left[\begin{matrix}l-2\\ k-1\end{matrix}\right]+\left[\begin{matrix}l-1\\ k\end{matrix}\right]\\
%&=-(1-t^{k})\left[\begin{matrix}l-1\\ k\end{matrix}\right]+\left[\begin{matrix}l-1\\ k\end{matrix}\right]=t^{k}\left[\begin{matrix}l-1\\ k\end{matrix}\right].
%\end{align*}
%\end{exmp}
%\medskip

\medskip

We can give another formula for the Kostka-Foulkes polynomials of hook-shaped partitions (lower partition $\mu$).
Recall the formula \cite[p. 243]{M}:
\begin{align}\label{e:column}
K_{\la, (1^n)}=t^{n(\lambda')}\frac{[n]!}{\prod_{x\in\lambda}[h(x)]}
\end{align}
where $h(x)=\la_i+\la_j'-i-j+1$ is the hook length of $x=(i, j)$.

\begin{prop} For all $\lambda\geqslant (n-k, 1^k)$
\begin{equation}
K_{\lambda, (n-k, 1^k)}=\sum_{i=1}^{l(\lambda)}(-1)^it^{\lambda_i-n+k-i+1}\sum_{\tau^j}\frac{t^{n((\tau^j)')}[k]!}{\prod_{x\in\tau^j}[h(x)]},
\end{equation}
where $\tau^j$ runs through each partition such that $\tau^j/\lambda^{(i)}$ is a horizontal $(\lambda_i-n+k-i+1)$-strip.
\end{prop}
\begin{proof} It follows from Corollary \ref{c:alg} that  $K_{\lambda, (n-k, 1^k)}$ is written as an alternative sum
in terms of $K_{\tau^j, 1^{k}}$, where $\tau^j$ runs through each partition such that $\tau^j/\lambda^{(i)}$ is a horizontal $(\lambda_i-n+k-i+1)$-strip, the formula is then proved by using \eqref{e:column}.
\end{proof}
%\bigskip

\vskip30pt \centerline{\bf Acknowledgments}
The project is partially supported by
Simons Foundation grant Nos. 198129 and 523868 and NSFC grant Nos. 11271138 and 11531004.
\bigskip

\bibliographystyle{plain}

\end{document}